\documentclass[11pt]{article}
\usepackage{latexsym,amsmath,color,amsthm,amssymb,epsfig,graphicx,mathrsfs,comment}
\usepackage{enumitem,graphicx}
\usepackage[linktocpage=true]{hyperref}
\usepackage{setspace}
\usepackage[toc,page]{appendix}
\usepackage[left=1in,top=1in,right=1in,bottom=1in]{geometry}
\usepackage{bm}
\usepackage[compress]{cite}
\setlength{\parindent}{0pt}

\makeatletter
\def\thm@space@setup{\thm@preskip=12pt \thm@postskip=8pt}

\setlength{\parindent}{0pt}

\newcommand{\N}{\mathbb{N}}

\newcommand{\A}{\mathcal{A}}
\newcommand{\B}{\mathcal{B}}

\newcommand{\F}{\mathcal{F}}
\newcommand{\HM}{\mathcal{H}}
\newcommand{\h}{\mathcal{H}}

\newtheorem{thm}{Theorem}

\newtheorem{prop}{Proposition}
\newtheorem{lemma}{Lemma}

\newtheorem{conj}{Conjecture}

\theoremstyle{definition}

\theoremstyle{remark}

\begin{document}
	
\title{Non-trivial $d$-wise Intersecting families }
\author{
	Jason O'Neill \\
	Department of Mathematics, \\
	University of California, San Diego \\
	\texttt{jmoneill@ucsd.edu}
	\and
	Jacques Verstra\"{e}te
	\footnote{Research supported by NSF award DMS-1800332} \\
	Department of Mathematics, \\
	University of California, San Diego \\
	\texttt{jverstra@math.ucsd.edu}
}

\maketitle

\begin{abstract}
For an integer $d \geq 2$, a family $\F$ of sets is \textit{$d$-wise intersecting} if for any distinct sets $A_1,A_2,\dots,A_d \in \F$, $A_1 \cap A_2 \cap \dots \cap A_d \neq \emptyset$, and \textit{non-trivial} if $\bigcap \F = \emptyset$. Hilton and Milner conjectured that for $k \geq d \geq 2$ and large enough $n$, the extremal non-trivial $d$-wise intersecting family of $k$-element subsets of $[n]$ is one of the following two families:
\begin{eqnarray*}
 \HM(k,d) &=&  \{A \in \textstyle{{[n] \choose k}} : [d-1] \subset A, A \cap [d,k+1] \neq \emptyset\} \cup \{[k+1] \setminus \{i \}  : i \in [d - 1]\} \\
 \A(k,d) &=& \{ A \in \textstyle{{[n] \choose k}} :  |A \cap [d+1]| \geq d   \}.
  \end{eqnarray*}
 The celebrated Hilton-Milner Theorem states that $\HM(k,2)$ is the unique extremal non-trivial intersecting family for $k>3$. 
 We prove the conjecture and prove a stability theorem, stating that any large enough non-trivial $d$-wise intersecting family of $k$-element subsets of $[n]$ 
 is a subfamily of $\A(k,d)$ or $\HM(k,d)$. 
 \end{abstract}	

\section{Introduction}

The celebrated Erd\H{o}s-Ko-Rado Theorem~\cite{EKR} states that for $n \geq 2k$, the maximum size of an intersecting family of $k$-element subsets of $[n] := \{1,2,\dots,n\}$ is 
${n - 1 \choose k - 1}$. Furthermore, equality holds for $n > 2k$ if and only if there is a point in the intersection of all sets in the family.
Here and in what follows we write ${[n] \choose k}$ for the family of $k$-element subsets of $[n]$, and $[a,b] = \{a,a+1,\dots,b\}$ for integers $a < b$. 
 In their paper, Erd{\H o}s, Ko and Rado asked for the maximum 
size of an intersecting family $\F$ of $k$-element subsets of $[n]$ such that $\bigcap \F = \emptyset$. This question was answered by Hilton and Milner \cite{HM}. 

\begin{thm}[Hilton-Milner]\label{thm:HM}
Let $n > 2k$ and $k \geq 3$. If $\F \subset \binom{[n]}{k}$ is a non-trivial intersecting family, then $|\F| \leq \binom{n-1}{k-1}-\binom{n-k-1}{k-1}+1$.
\end{thm}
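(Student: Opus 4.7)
My plan is to use the shifting (compression) method combined with a direct counting bound. For any $1 \leq i < j \leq n$, the shift $S_{ij}$ replaces each $A \in \F$ containing $j$ but not $i$ with $(A \setminus \{j\}) \cup \{i\}$ whenever the result is not already in $\F$. Shifting preserves $|\F|$ and the intersecting property, but it may destroy non-triviality. I would address this by applying only those shifts that preserve non-triviality; the highly structured families in which no further such shift is possible are bounded separately by a short direct argument. This reduces us to a left-compressed, non-trivial, intersecting $\F^* \subseteq \binom{[n]}{k}$ of the same size.

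Once $\F^*$ is left-compressed and non-trivial, some $B \in \F^*$ omits the element $1$, and left-compression forces the specific set $[2,k+1]$ to lie in $\F^*$; hence every $A \in \F^*$ meets $[2,k+1]$. Split $\F^* = \F^*_1 \sqcup \F^*_0$ according to whether $1 \in A$. Counting $k$-subsets of $[n]$ containing $1$ that meet $[2,k+1]$ gives
\[
 |\F^*_1| \;\leq\; \binom{n-1}{k-1} - \binom{n-k-1}{k-1}.
\]
It then suffices to show $|\F^*_0| \leq 1$ in the extremal case. By left-compression, the set $[k+1] \setminus \{i\}$ lies in $\F^*$ for every $i \in [2,k+1]$; hence any $B \in \F^*_0$ must meet each of these, forcing $|B \cap [2,k+1]| \geq 2$. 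Iterating such shift-and-intersect reasoning, and using the cross-intersecting condition between $\F^*_0$ and $\F^*_1$ together with the hypothesis $n > 2k$, I would conclude that either $\F^*_0 = \{[2,k+1]\}$, accounting for the $+1$ in the bound, or else $|\F^*_1| + |\F^*_0|$ drops strictly below the target and the extremal condition fails.

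The principal obstacle is the shifting reduction itself, since non-triviality is not preserved by arbitrary shifts and careful bookkeeping is needed to avoid silently collapsing an extremal family into a trivial one. A secondary difficulty is the cross-intersecting bound on $|\F^*_0|$ in the shifted case: the hypothesis $n > 2k$ is used essentially here to rule out the alternate extremal configuration arising when $n$ is close to $2k$, and an honest treatment of this inequality is the most delicate part of the counting step.
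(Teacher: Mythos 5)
This statement is quoted from Hilton and Milner \cite{HM} and the paper gives no proof of it, so there is nothing internal to compare against; I can only assess your outline on its own terms. As written it is the classical shifting route, but the two places where the actual content of the theorem lives are exactly the two places you defer, and neither deferral is justified.

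First, the reduction to a left-compressed non-trivial family. A shift $S_{ij}$ can turn a non-trivial intersecting family into a star (centered at $i$), and this can happen even for shifts with $i\geq 2$; the families for which some shift would do this are precisely those with a cover of size $2$, i.e.\ every member meets some fixed pair $\{i,j\}$. Bounding these is not a ``short direct argument'': writing $\F=\F_{ij}\cup\F_{i\bar j}\cup\F_{\bar i j}$, one has $|\F_{ij}|\leq\binom{n-2}{k-2}$, but to finish one needs that two nonempty cross-intersecting families $\F_{i\bar j},\F_{\bar i j}\subseteq\binom{[n]\setminus\{i,j\}}{k-1}$ satisfy $|\F_{i\bar j}|+|\F_{\bar i j}|\leq\binom{n-2}{k-1}-\binom{n-k-1}{k-1}+1$. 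That cross-intersecting inequality is itself a Hilton--Milner-strength statement and must be proved, not assumed. Moreover, if you only apply ``those shifts that preserve non-triviality,'' the surviving family $\F^*$ need not be fully shifted, and your later appeals to shiftedness (e.g.\ that $[2,k+1]\in\F^*$ and that $[k+1]\setminus\{i\}\in\F^*$ for all $i$) then lack justification.

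Second, and more seriously, the claim that $|\F^*_0|\leq 1$ ``in the extremal case'' is the heart of the theorem and your sketch does not establish it. It is simply false that a shifted non-trivial intersecting family has at most one member avoiding $1$: the family $\A(k,2)=\{A:|A\cap[3]|\geq 2\}$ is shifted, intersecting and non-trivial, and contains $\binom{n-3}{k-2}$ sets avoiding $1$. What saves the bound there is that $\F^*_1$ is then far below $\binom{n-1}{k-1}-\binom{n-k-1}{k-1}$; so the correct argument must prove a quantitative trade-off between $|\F^*_0|$ and the deficiency of $|\F^*_1|$, and ``iterating such shift-and-intersect reasoning'' names the desired conclusion without supplying the mechanism. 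Until both the $\tau=2$ case and this trade-off are carried out in full (as in, say, the Frankl--F\"uredi proof), the proposal is an outline of a known strategy rather than a proof.
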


This may be viewed as a stability version of the Erd\H{o}s-Ko-Rado Theorem, in the sense that an intersecting family of size larger than the bound in Theorem \ref{thm:HM} 
is necessarily a subfamily of the extremal intersecting family -- i.e. there a point in the intersection of all sets in the family.  There have been many recent directions~\cite{BD,HY,K,FHHZ,FS,GS} in the classical Hilton-Milner theory generalizing Theorem \ref{thm:HM}. Hilton and Milner~\cite{HM} considered an extension of Theorem \ref{thm:HM} to $d$-wise intersecting families: a family $\F$ of sets is \textit{$d$-wise intersecting} if any set of $d$
distinct sets in $\F$ have non-empty intersection, with the case $d = 2$ corresponding to intersecting families. Hilton and Milner~\cite{HM} conjectured that if $\F$ is a $d$-wise intersecting family of $k$-sets in $[n]$,  then for large enough $n$, one of the following two families is extremal:
\begin{eqnarray*}
 \A(k,d) &=& \{ A \in \textstyle{{[n] \choose k}} :  |A \cap [d+1]| \geq d   \} \\
 \HM(k,d) &=& \{A \in \textstyle{{[n] \choose k}} : [d-1] \subset A, A \cap [d,k+1] \neq \emptyset\} \cup \{[k+1] \setminus \{i \}  : i \in [d - 1]\}. 
 \end{eqnarray*}
 We suppress notation to indicate that $\A(k,d)$ and $\HM(k,d)$ depend on $n$. 
It is straightforward to check for large enough $n$ relative to $k$ and $d$ that $|\A(k,d)| \geq |\HM(k,d)|$ if and only if $2d \geq k + 1$.
In fact, the sizes of these families are given by
\begin{eqnarray*}
|\A(k,d)| &=&  (d+1) \binom{n-d-1}{k-d} + \binom{n-d-1}{k-d-1}  \\
|\HM(k,d)|  &=&  \binom{n-d+1}{k-d+1} - \binom{n-k-1}{k-d+1} + d - 1.
\end{eqnarray*} 

In this paper, we prove the conjecture of Hilton and Milner, including a stability result.  To state our theorem, we need the following additional non-trivial $d$-wise intersecting family:
\[  \B(k,d) = \{B \in \textstyle{{[n] \choose k}}: |B \cap [d-1]|=d-2, [d,k] \subset B\} \bigcup \; \{ B \in \textstyle{{[n] \choose k}} : [d-1] \subset B,  B \cap [d,k] \neq \emptyset\}. \]
The role of this family is in the stability for non-trivial $d$-wise intersecting families of $k$-element sets when $2d < k$, in which case $|\A(k,d)| \leq |\B(k,d)| \leq |\HM(k,d)|$ with equalities if and only if $k = 3$ and $d = 2$ and are all isomorphic when $d=k$. Our main theorem is as follows:

\begin{thm}\label{thm:mainthm1}
Let $k,d$ be integers with $2 \leq d < k$. Then there exists $n_0(k,d)$ such that for $n \geq n_0(k,d)$, if $\F$ is a non-trivial, $d$-wise intersecting family
of $k$-element subsets of $[n]$, then
\[ |\F| \leq \max\{ |\HM(k,d)|, |\A(k,d)| \}.\]
Furthermore, if $2d \geq k$ and $|\F| > \min\{|\HM(k,d)|,|\A(k,d)|\}$, then $\F \subseteq \HM(k,d)$ or $\F \subseteq \A(k,d)$. If $2d < k$ and $|\F| > |\B(k,d)|$, 
then $\F \subseteq \HM(k,d)$.
\end{thm}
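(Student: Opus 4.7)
My plan is to reduce to a shifted family via the left-shift $S_{ij}$, which preserves the $d$-wise intersecting property. A subtlety is that shifting can destroy non-triviality, but only when some pair $\{i,j\}$ is a cover (every $A \in \F$ meets $\{i,j\}$); I handle this by showing that such highly-constrained families are either already subfamilies of $\HM(k,d)$ or $\A(k,d)$, or small enough to verify the theorem directly. Otherwise, I may assume $\F$ is shifted and non-trivial. The key observation for shifted non-trivial $\F$ is that for each $i \in [k+1]$, non-triviality forces some set to miss $i$, and iterated shifting produces $[k+1]\setminus\{i\} \in \F$; applying the $d$-wise intersecting property to any $A \in \F$ together with $d-1$ of these witness sets then yields $|A \cap [k+1]| \geq d$ for every $A \in \F$.

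This reduces the problem to classifying shifted $d$-wise intersecting families whose members all have traces of size at least $d$ on $[k+1]$. The dichotomy I plan to prove is: either (i) all but a bounded number of sets of $\F$ contain $[d-1]$, in which case the exceptional sets are pinned down to $\{[k+1]\setminus\{i\} : i \in [d-1]\}$ by the shifted, $d$-wise intersecting structure, giving $\F \subseteq \HM(k,d)$; or (ii) many sets fail to contain $[d-1]$, in which case a further trace analysis on $[k+1]$ combined with shifted-ness forces $|A \cap [d+1]| \geq d$ for all $A \in \F$, giving $\F \subseteq \A(k,d)$ after relabeling. The size bound $|\F| \leq \max\{|\HM(k,d)|, |\A(k,d)|\}$ and the stability statements then follow by comparison against $|\HM(k,d)|$, $|\A(k,d)|$, and (when $2d < k$) $|\B(k,d)|$.

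The main obstacle is the quantitative dichotomy, particularly case (ii): passing from \emph{traces of size $\geq d$ on $[k+1]$} to \emph{traces of size $\geq d$ on $[d+1]$} is nontrivial. For small $d$ this follows from directly applying the $d$-wise intersection property to $A$ together with $d-1$ of the witness sets $[k+1] \setminus \{j\}$ indexed by elements of $[d+1] \setminus A$; for larger $d$, one must take several sets $A_1, \ldots, A_t \in \F$ with traces of size exactly $d$ and apply the $d$-wise intersection condition to a carefully chosen combination to derive a contradiction. The threshold $|\B(k,d)|$ is sharp in the $2d<k$ regime precisely because $\B(k,d)$ is itself a non-trivial $d$-wise intersecting family not contained in $\HM(k,d)$, so the proof of $\F \subseteq \HM(k,d)$ above this threshold must explicitly rule out $\B$-like configurations, which I expect to require the most delicate case analysis.
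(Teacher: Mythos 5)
Your approach via shifting is genuinely different from the paper's, which uses the Delta system method (F\"uredi's intersection semilattice lemma) to extract $d$-sets of large core degree, shows that these form a $(d-1)$-intersecting family of $d$-sets, and classifies that family before counting. Your opening moves are sound and appealing: the shift $S_{ij}$ does preserve the $d$-wise intersecting property; a shift can only trivialize a non-trivial family when $\{i,j\}$ covers $\F$; and for a shifted non-trivial family every $i\in[k+1]$ is omitted by some member, so shiftedness places $[k+1]\setminus\{i\}$ in $\F$ for \emph{every} $i\in[k+1]$, whence intersecting an arbitrary $A\in\F$ with $d-1$ of these witnesses forces $|A\cap[k+1]|\geq d$. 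That already gives the correct order of magnitude $O(n^{k-d})$ with no Delta systems, which is a genuine economy at that stage.

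However, the heart of the argument --- your two-case dichotomy --- is only sketched, and as stated it is falsified by $\B(k,d)$ itself. One can check that $\B(k,d)$ is a shifted, non-trivial, $d$-wise intersecting family containing $(d-1)(n-k)$ members with $|B\cap[d-1]|=d-2$, so it falls squarely into your case (ii) (``many sets fail to contain $[d-1]$''); yet its members of the form $[d-1]\cup\{k\}\cup X$ with $X\subset[k+2,n]$ satisfy $|B\cap[d+1]|=d-1$ when $2d<k$, so the conclusion of case (ii), that $|A\cap[d+1]|\geq d$ for all $A\in\F$, cannot be forced there. The configuration that makes $|\B(k,d)|$ the correct stability threshold in the $2d<k$ regime lives exactly in the crack between your two cases; you acknowledge the issue at the end but the case structure has no slot for it, and this is precisely where the paper has to work hardest (Lemma \ref{lemma:firststructure}). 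Two further steps are asserted without proof: (a) that a family covered by a pair $\{i,j\}$ is a subfamily of $\HM(k,d)$ or $\A(k,d)$ or is small --- for $d\geq3$ this needs an argument, for which the natural tool is the bound $|A\cap B|\geq d-1$ of Lemma \ref{lemma:mwiseintersections}; and (b) shifting yields conclusions about $S_{ij}(\F)$, whereas the containments $\F\subseteq\HM(k,d)$ or $\F\subseteq\A(k,d)$ concern the original family, and un-shifting structural information is a known, nontrivial step that the outline omits.
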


We use the Delta system method to prove Theorem \ref{thm:mainthm1}, which gives $n_0(k,d) = d + e(k^2 2^k)^{2^k}(k-d)$. This is very unlikely to be best possible, 
and we conjecture that the following holds:

\begin{conj}\label{conj:mainconj}
For $k > d \geq 2$ and $n \geq kd/(d - 1)$, the unique extremal non-trivial $d$-wise intersecting families of $k$-element subsets of $[n]$ are 
$\HM(k,d)$ and $\A(k,d)$.
\end{conj}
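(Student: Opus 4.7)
The plan is to prove Conjecture \ref{conj:mainconj} via Frankl's shifting technique combined with a covering-number analysis, aiming for the sharp threshold $n \geq kd/(d-1)$. This threshold is precisely where Frankl's theorem gives $|\F| \leq \binom{n-1}{k-1}$ for any $d$-wise intersecting $\F \subseteq \binom{[n]}{k}$, with equality only for a star; the task is therefore to improve this bound to $\max(|\HM(k,d)|, |\A(k,d)|)$ under the non-triviality hypothesis.

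First, I would apply shifts $S_{ij}$ for $i < j$ to reduce $\F$ to a left-compressed family of the same size. Shifting preserves $d$-wise intersection but may destroy non-triviality by creating a common element. The degenerate case where shifting makes $\F$ trivial corresponds to a restrictive pair-covering structure on the original $\F$, which can be compared with $\HM(k,d)$ directly by a trace-counting argument. In the principal case we may assume $\F$ is both shifted and non-trivial, and in particular the minimum cover of $\F$ is an initial segment $[\tau]$ with $\tau = \tau(\F) \geq 2$.

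The core of the proof is a structural dichotomy for shifted non-trivial $d$-wise intersecting families. One partitions $\F$ by the trace on $[d+1]$ and applies the $d$-wise intersection condition to $d$-tuples of members with specified traces to extract cross-intersection inequalities among the pieces. The target is to show that either (i) every $F \in \F$ satisfies $|F \cap [d+1]| \geq d$, giving $\F \subseteq \A(k,d)$; or (ii) every $F \in \F$ either satisfies $[d-1] \subset F$ with $F \cap [d,k+1] \neq \emptyset$ or is one of the exceptional sets $[k+1] \setminus \{i\}$ for some $i \in [d-1]$, giving $\F \subseteq \HM(k,d)$. The size bound and the uniqueness of the extremal configurations then follow from the explicit formulas for $|\A(k,d)|$ and $|\HM(k,d)|$, together with a standard argument that shifting an extremal family $\F$ yields $\A(k,d)$ or $\HM(k,d)$ only if $\F$ was already isomorphic to one of these.

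The main obstacle is reaching the sharp threshold $n \geq kd/(d-1)$ rather than the much larger $n_0(k,d)$ in Theorem \ref{thm:mainthm1}: the Delta-system method underlying that theorem wastes a factor exponential in $k$ because the sunflower lemma replaces arbitrary kernels by sunflowers of bounded order. To attain sharpness, one must exploit the full strength of the $d$-wise intersection rather than its iterated pairwise consequences; candidate tools include a refined kernel argument tracking the entire intersection pattern of $d$-tuples, a Kruskal-Katona shadow inequality applied to the shifted family, or a switching argument that converts any shifted non-trivial $d$-wise intersecting family into one of $\A$- or $\HM$-type without decreasing size. A secondary subtlety is that at the extreme boundary $n = \lceil kd/(d-1) \rceil$ additional extremal families can exist: for instance, when $d = 2$ and $n = 2k$, many maximum non-trivial intersecting families coexist, so the conjecture may require the strict inequality $n > kd/(d-1)$ in such boundary cases.
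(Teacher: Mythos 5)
The statement you are addressing is Conjecture \ref{conj:mainconj}, which the paper explicitly leaves \emph{open}: the authors prove the extremal result only for $n \geq n_0(k,d) = d + e(k^2 2^k)^{2^k}(k-d)$ (Theorem \ref{thm:mainthm1}) and remark that this threshold ``is very unlikely to be best possible.'' So there is no proof in the paper to compare against, and your proposal does not supply one either. The decisive step in your outline --- the ``structural dichotomy'' asserting that every shifted non-trivial $d$-wise intersecting family is contained in $\A(k,d)$ or in $\HM(k,d)$ (up to the exceptional sets $[k+1]\setminus\{i\}$) --- is announced as a ``target'' with no argument given. That dichotomy is essentially equivalent to the conjecture itself: everything preceding it (Frankl's bound $|\F| \leq \binom{n-1}{k-1}$ for $d$-wise intersecting families when $n \geq kd/(d-1)$, the fact that the shifts $S_{ij}$ preserve $d$-wise intersection) is standard, and everything following it is routine, so the entire difficulty of the problem is concentrated in the one step you have not carried out. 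The closing list of ``candidate tools'' (refined kernel arguments, Kruskal--Katona, switching) is a research program, not a proof.

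Two further concrete gaps. First, the degenerate case in which compression destroys non-triviality is dispatched with an unspecified ``trace-counting argument''; even for $d=2$ this case requires genuine work (it is precisely where the Hilton--Milner structure can be lost under shifting), and for general $d$ you would need to show that a family which becomes a star after shifting already obeys the claimed bound in its original, non-trivial form. Second, you yourself observe that at the boundary $n = kd/(d-1)$ (for instance $d=2$, $n=2k$) additional extremal families may coexist, which would falsify the uniqueness assertion of the conjecture exactly as stated; a correct treatment must either rule this out or weaken the claim to $n > kd/(d-1)$. As it stands, the proposal correctly identifies the landscape and the obstruction (the doubly-exponential loss in the Delta-system method of Section \ref{sec:deltasystemmethod}), but it establishes nothing beyond what is already known.
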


The structure of this paper is as follows. In Section \ref{sec:basicprop}, we establish basic properties and structural results. In Section \ref{sec:deltasystemmethod}, we describe the Delta system method and apply it to non-trivial $d$-wise intersecting families. We then prove Theorem \ref{thm:mainthm1} in Section \ref{sec:proof}. We will use calligraphic font to denote set families, capital letters to denote sets and lower case letters to denote elements.

\section{Preliminaries}\label{sec:basicprop}

In this section, we will prove some basic facts and structural results pertaining to non-trivial $d$-wise intersecting families. 

\subsection{Basic Properties}

We will first show, as was initially done by Hilton and Milner in \cite{HM}, that there cannot be a $k$-uniform non-trivial $d$-wise intersecting family for $d>k$. 

\begin{lemma}\label{lemma:largeintersections}\textup{\cite{HM}}
Let $d>k$, then there does not exist a $d$-wise intersecting non-trivial $\F \subset \binom{[n]}{k}$.
\end{lemma}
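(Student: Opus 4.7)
The plan is to force a contradiction by exhibiting $d$ distinct members of $\F$ whose common intersection is empty. Fix an arbitrary $A \in \F$; since $|A| = k < d$, the strategy will be to cover $A$ by a few ``witness'' sets chosen using non-triviality, and then to observe that this cover is already small enough to violate the $d$-wise intersecting condition.

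Since $\bigcap \F = \emptyset$, for each $a \in A$ there exists some $C_a \in \F$ with $a \notin C_a$. Setting $\mathcal{C} = \{C_a : a \in A\}$ gives $|\mathcal{C}| \leq |A| = k$ and, by construction,
\[ A \cap \bigcap_{C \in \mathcal{C}} C = \emptyset. \]
The subfamily $\{A\} \cup \mathcal{C}$ therefore consists of at most $k+1 \leq d$ distinct sets with empty common intersection. Assuming the natural convention that $|\F| \geq d$ (so the $d$-wise intersecting condition is non-vacuous), I pad $\{A\} \cup \mathcal{C}$ with arbitrary further members of $\F$ to obtain exactly $d$ distinct sets $A_1, \dots, A_d \in \F$ containing $A$ and all of $\mathcal{C}$. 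Enlarging a collection can only shrink its intersection, so
\[ A_1 \cap \cdots \cap A_d \;\subseteq\; A \cap \bigcap_{C \in \mathcal{C}} C \;=\; \emptyset, \]
contradicting that $\F$ is $d$-wise intersecting.

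The argument is essentially a pigeonhole observation, so no real obstacle arises. The witness sets $C_a$ need not be distinct, but this only strengthens the bound $|\mathcal{C}| \leq k$. The hypothesis $d > k$ is used exactly once and in an unavoidable way: it is precisely what guarantees $|\{A\} \cup \mathcal{C}| \leq d$, so that the $d$-wise intersecting condition becomes applicable and produces the required contradiction.
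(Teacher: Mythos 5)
Your proof is correct and takes essentially the same approach as the paper: fix $A \in \F$, use non-triviality to choose for each $a \in A$ a witness set avoiding $a$, and observe that these at most $k+1 \leq d$ sets already have empty common intersection. The only difference is that you explicitly pad the collection to exactly $d$ distinct sets to match the letter of the definition, a detail the paper leaves implicit.
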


\begin{proof}
Fix $A \in \F$, then for each $a \in A$, there exists $X_a \in \F$ so that $a \notin X_a$ by the definition of non-trivial. Then $A \cap \bigcap_{a \in A} X_a = \emptyset$ which is a contradiction.
\end{proof}

A similar argument as in Lemma \ref{lemma:largeintersections} also gives an upper-bound on the $m$-wise intersections from a non-trivial $d$-wise intersecting family. 

\begin{lemma}\label{lemma:mwiseintersections} 
Given a non-trivial $d$-wise intersecting family $\F \subset \binom{[n]}{k}$ and $A_1, \ldots , A_m \in \F$,$$ |\bigcap\limits_{i=1}^m A_i| \geq d-(m-1).$$
\end{lemma}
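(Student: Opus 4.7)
The plan is to mimic the extremal argument from Lemma~\ref{lemma:largeintersections}, using the intersection $B := \bigcap_{i=1}^m A_i$ in place of the single set $A$ employed there. The inequality $|B|\geq d-(m-1)$ is only non-trivial when $m\leq d$, so I will focus on that range; for $m>d$ the claim reduces to $|B|\geq 0$, which is immediate.

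The core step would be a proof by contradiction: assume $|B|\leq d-m$, and use non-triviality to enlarge the family $\{A_1,\dots,A_m\}$ to a subfamily of $\F$ of size at most $d$ whose joint intersection is empty. For each $b\in B$, non-triviality of $\F$ supplies some $X_b\in\F$ with $b\notin X_b$. Note that each $X_b$ is automatically distinct from every $A_i$, because $b\in B\subseteq A_i$ while $b\notin X_b$. Consequently, the collection $\{A_1,\dots,A_m\}\cup\{X_b : b\in B\}$ contains at most $m+|B|\leq d$ distinct members of $\F$, and its overall intersection is contained in $B$ but avoids every $b\in B$, so it is empty. Padding up with any remaining members of $\F$ if necessary, one obtains $d$ distinct sets in $\F$ with empty intersection, contradicting the $d$-wise intersecting hypothesis.

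I do not anticipate any real obstacle; the whole argument is a one-line extension of the proof of Lemma~\ref{lemma:largeintersections}. The only careful bookkeeping concerns the possibility that several of the $X_b$ coincide, which only shrinks the constructed subfamily and hence strengthens the contradiction, so it causes no harm.
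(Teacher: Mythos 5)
Your argument is correct and is essentially identical to the paper's one-line proof: assume $|\bigcap A_i|\leq d-m$, pick for each element $b$ of the intersection a set $X_b\in\F$ avoiding $b$, and observe that the resulting at most $d$ distinct sets have empty intersection, contradicting the $d$-wise intersecting property. Your extra bookkeeping (distinctness of the $X_b$ from the $A_i$, possible coincidences among the $X_b$, padding to $d$ sets) only makes explicit what the paper leaves implicit.
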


\begin{proof}
Suppose not, then we may find a set in $\F$ for each element in the above intersection which does not contain that element and violate the $d$-wise intersecting property of $\F$. 
\end{proof}

In the case where $d=k$, we first note that $K_{k+1}^{(k)}$ is a non-trivial $k$-wise intersecting family and prove this is the only such example.

\begin{prop}\label{thm:dequalsk}\textup{\cite{HM}}
If $n \geq k+1$ and $\F \subset \binom{[n]}{k}$ is a $k$-wise intersecting non-trivial family, then $\F \cong K_{k+1}^{(k)} \cong \A(k,k)$.
\end{prop}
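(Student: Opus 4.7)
The target object $K_{k+1}^{(k)}$ is just $\binom{X}{k}$ for a $(k+1)$-element set $X$, so the goal is to locate such an $X$ in $[n]$ and show $\F = \binom{X}{k}$. My starting point is to pin down the pairwise intersection structure. I fix any $A \in \F$ and apply Lemma \ref{lemma:mwiseintersections} with $d = k$ and $m = 2$ to the pair $A, B$ for each $B \in \F \setminus \{A\}$: this gives $|A \cap B| \geq k - 1$, hence $|A \cap B| = k - 1$ since $A \neq B$ are both of size $k$. Consequently every $B \in \F \setminus \{A\}$ has a unique representation $B = (A \setminus \{a_B\}) \cup \{b_B\}$ with $a_B \in A$ and $b_B \in [n] \setminus A$.

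The next step is to show that all the outside elements $b_B$ coincide. Non-triviality guarantees that for each $a \in A$ some member of $\F$ misses $a$; such a member necessarily lies in $\F \setminus \{A\}$ and carries $a_B = a$. Since $|A| = k \geq 2$, I can pick $B_1, B_2 \in \F \setminus \{A\}$ with $a_{B_1} \neq a_{B_2}$. A short calculation from the parametrization shows $|B_1 \cap B_2| = k - 2$ when $b_{B_1} \neq b_{B_2}$ and $k - 1$ when $b_{B_1} = b_{B_2}$; since Lemma \ref{lemma:mwiseintersections} forces $|B_1 \cap B_2| \geq k - 1$, I conclude that a common value $b_0 := b_{B_1} = b_{B_2}$ exists. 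Any other $B \in \F \setminus \{A\}$ has $a_B$ differing from at least one of $a_{B_1}, a_{B_2}$, so comparing with that one yields $b_B = b_0$ by the same computation.

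Setting $X := A \cup \{b_0\}$, which is a $(k+1)$-element set since $b_0 \notin A$, the previous step gives $\F \subseteq \binom{X}{k}$. Conversely $A = X \setminus \{b_0\} \in \F$, and for each $a \in A$ non-triviality provides a set in $\F \setminus \{A\}$ with $a_B = a$, necessarily equal to $(A \setminus \{a\}) \cup \{b_0\} = X \setminus \{a\}$. Hence $\F = \binom{X}{k} \cong K_{k+1}^{(k)}$. I do not anticipate a genuine obstacle here: once the pairwise intersections are forced to have size exactly $k-1$, the argument is essentially determined, and the only mild care needed is to use the hypothesis $k \geq 2$ to produce two members with distinct $a_B$; without this, the ``common $b_0$'' step would have nothing to compare against.
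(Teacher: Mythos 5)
Your proof is correct and follows essentially the same route as the paper's: Lemma \ref{lemma:mwiseintersections} forces all pairwise intersections with (and among the neighbors of) a fixed $A$ to have size exactly $k-1$, and non-triviality supplies members avoiding each element of $A$, which pins down the $(k+1)$-element ground set $X$. The only difference is organizational — the paper first exhibits $K_{k+1}^{(k)} \subseteq \F$ and then asserts saturation for the reverse containment, whereas you establish $\F \subseteq \binom{X}{k}$ directly from the pairwise intersection constraints, which makes that final step explicit rather than merely claimed.
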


\begin{proof}
Let $A \in \F$, then without loss of generality we may assume that $A = [k]$. Observe that there exists $A_1 \in \F$ so that $1 \notin A_1$ and by Lemma \ref{lemma:mwiseintersections}, $|A \cap A_1|=k-1$. Without loss of generality, let $A_1 \setminus A = \{k+1\}$. Then for each $i \in [2,k]$, there exists $A_i \in \F$ so that $i \notin A_i$. Next, Lemma \ref{lemma:mwiseintersections} yields that $|A \cap A_i|=k-1$ and $|A_1 \cap A_i| = k-1$ and as a result $A_i \cap [k+2,n] = \emptyset$. Putting these all together we get that $K_{k+1}^{(k)} \subseteq \F$ and noting that $K_{k+1}^{(k)}$ is saturated yields the desired result.
\end{proof}

Proposition \ref{thm:dequalsk} verifies Conjecture \ref{conj:mainconj} in the case where $d=k$.

\subsection{Structure of non-trivial $d$-wise Intersecting Families}\label{sec:structuredeltarich}

Following the notation from Mubayi and the second author \cite{MV}, a \textit{Delta system} is a hypergraph $\Delta$ such that for all distinct $e, f \in \Delta$, $ e \cap f = \cap_{g \in \Delta} g$. We let $\Delta_{k,s}$ be a $k$-uniform Delta system with $s$ edges and define $\text{core}(\Delta):= \cap_{g \in \Delta} g$. Let $\F \subset \binom{[n]}{k}$ and $X \subset [n]$, then define the \textit{core degree} of $X$ in $\F$ to be $$ d_{\F}^\star(X) := \max \{ s: \exists \; \Delta_{k,s} \text{ so that } \text{core}(\Delta_{k,s}) = X   \}. $$

\medskip
In this section, we will examine the collection of $d$-sets with large core degree with respect to a non-trivial $d$-wise intersecting family. We will show that this collection of $d$-sets is necessarily isomorphic to a subfamily of one of the corresponding collections of $d$-sets in the extremal examples $\HM(k,d)$ and $\A(k,d)$. Moreover, given enough $d$-sets with large core degree, we show that $|\F|$ is less than or equal to $ \max\{ |\A(k,d)|, |\HM(k,d)| \}$.

\medskip 
By Lemma \ref{lemma:mwiseintersections}, $|A \cap B| \geq d-1$ for all $A,B \in \F$ in a non-trivial $d$-wise intersecting family $\F$, and hence $d_{\F}^\star(X) \leq 1$ whenever $|X|<d-1$. We will now show that $(d-1)$-sets cannot have large core degree in non-trivial $d$-wise intersecting families.

\begin{lemma}\label{lemma:dminus1deltarich}
Let $\F \subset \binom{[n]}{k}$ be non-trivial $d$-wise intersecting and $X \in \binom{[n]}{d-1}$. Then $d_{\F}^\star(X) < k$.
\end{lemma}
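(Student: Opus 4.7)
The plan is to argue by contradiction. Suppose $d_{\F}^\star(X) \geq k$, so there is a Delta system $\{A_1, \ldots, A_k\} \subseteq \F$ with core $X$. The petals $A_i \setminus X$ are then pairwise disjoint, each of size $k - d + 1$. By non-triviality, for each $x \in X$ pick $B_x \in \F$ with $x \notin B_x$; since $X \subseteq A_j$ for every $j$, each $B_x$ differs from every $A_j$.

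The crux of the argument is to show that all the sets $B_x$ must coincide. Fix $i \in [k]$ and let $m$ denote the number of distinct sets in $\{B_x : x \in X\}$. The collection $\{A_i\} \cup \{B_x : x \in X\}$ has $1 + m$ distinct members; augment it with $d - 1 - m$ additional sets $A_j$ with $j \neq i$ to obtain a subfamily of exactly $d$ distinct members of $\F$, which is possible since $d \leq k$ by Lemma \ref{lemma:largeintersections}. The $d$-wise intersecting hypothesis forces this subfamily to have non-empty intersection, and the intersection is contained in $A_i \cap \bigcap_{x \in X} B_x$. This latter set lies in $A_i$ and avoids every $x \in X$, so it sits inside the petal $A_i \setminus X$. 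Letting $i$ range over $[k]$, the pairwise disjointness of the $k$ petals forces $|\bigcap_{x \in X} B_x| \geq k$, while $|B_x| = k$ provides the reverse inequality. Therefore $B_{x_1} = \cdots = B_{x_{d-1}} =: B$, a single set satisfying $B \cap X = \emptyset$.

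To conclude, consider the subfamily $\{A_1, \ldots, A_{d-1}, B\} \subseteq \F$: these are $d$ distinct members of $\F$ since $B \neq A_j$ (as $X \cap B = \emptyset$ but $X \subseteq A_j$). Their common intersection equals $(A_1 \cap \cdots \cap A_{d-1}) \cap B = X \cap B = \emptyset$, contradicting the $d$-wise intersecting hypothesis.

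The main subtlety is the distinctness bookkeeping when assembling the padded $d$-subfamily in the middle step, since the $d$-wise hypothesis gives no direct control over intersections of more than $d$ sets and one must carefully account for coincidences among the $B_x$. A secondary point is that the final step uses $A_1 \cap \cdots \cap A_{d-1} = X$, which requires $d \geq 3$; the case $d = 2$ is the classical Hilton-Milner setting.
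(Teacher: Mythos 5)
Your proof is correct for $d \geq 3$, and it takes a genuinely different route from the paper's. The paper fixes one witness $X_m$ at a time: intersecting $F_1 \cap F_2$ with the other $d-2$ witnesses pins down $|X_m \cap X| = d-2$, and then the pairwise bound $|X_m \cap F_j| \geq d-1$ from Lemma \ref{lemma:mwiseintersections} forces $X_m$ to meet all $k$ disjoint petals, giving the cardinality contradiction $|X_m| \geq d-2+k > k$. You instead apply the $d$-wise hypothesis to $A_i$ together with \emph{all} the witnesses at once (with careful padding to exactly $d$ distinct sets), deduce that $\bigcap_{x} B_x$ meets every petal, conclude that the witnesses coincide in a single set $B$ disjoint from the core, and then exhibit an explicitly empty $d$-fold intersection $A_1 \cap \cdots \cap A_{d-1} \cap B$. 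Your version never invokes Lemma \ref{lemma:mwiseintersections}, and a concrete advantage is that it handles $d=3$ uniformly, whereas the paper's main computation requires $d \geq 4$ and relegates $d=3$ to a sketched case analysis. Your caveat about $d=2$ is not a deficiency relative to the paper: the paper's proof likewise only covers $d \geq 3$, and indeed the statement is false for $d=2$ (for large $n$ the family $\HM(k,2)$ contains a $\Delta_{k,k}$ with core $\{1\}$, namely $k$ sets through the point $1$ using distinct elements of $[2,k+1]$ and otherwise disjoint), so the lemma must implicitly be read with $d \geq 3$.
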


\begin{proof}
Without loss of generality, suppose that $X=[d-1]$ is so that  $d_{\F}^\star(X) \geq k$.  Thus, there exists $\Delta_{k,k} = \{F_1, \ldots, F_k\} \subset \F$ so that $\text{core}(\Delta_{k,k}) = [d-1]$. Next, by the nontriviality of $\F$, for each $j \in [d-1]$, there exists $X_j \in \F$ so that $ j \notin X_j$.

\medskip
\noindent Now, when $d\geq 4$, since $F_1 \cap F_2 = [d-1]$ and $\F$ is $d$-wise intersecting, for $3 \leq m \leq d-1$ $$ F_1 \cap F_2 \cap ( \bigcap\limits_{j \neq m} X_j ) = \{m\}. $$ As a result,  $|X_m \cap [d-1]| = d-2$ and hence  $|X_m \cap (F_j \setminus [d-1])| \geq 1$ for all $j \in [k]$. This yields a contradiction as  \[|X_m| \geq |X_m \cap [d-1]| + \sum_{j=1}^k |X_m \cap (F_j \setminus [d-1])| > k.  \]
When $d=3$, the result follows similarly by considering the cases where $2 \in X_1$ and $2 \notin X_1$. \qedhere
\end{proof}

We are interested in $d$-sets which have large core degree since they intersect elements of our family $\F$ in many places. To this end, we say $D \in \binom{[n]}{d}$ has \textit{large core degree} if $d_{\F}^\star(D) \geq k$.

\begin{lemma}\label{lemma:largeintersectionswithdeltarich}
Let $\F \subset \binom{[n]}{k}$ be non-trivial $d$-wise intersecting and $D \subset [n]$ have large core degree. Then  $|A \cap D| \geq d-1$ for all $A \in \F$. 	
\end{lemma}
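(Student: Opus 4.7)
My plan is to argue by contradiction: suppose there is some $A \in \F$ with $|A \cap D| \leq d - 2$, and use the rigid structure of the Delta system witnessing the large core degree of $D$ to violate the size constraint $|A| = k$.

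Fix a witnessing Delta system $\{F_1, \dots, F_k\} \subset \F$ with core $D$, so that each $F_i = D \sqcup P_i$ where the petals $P_i := F_i \setminus D$ are pairwise disjoint and of size $k - d$. Applying Lemma \ref{lemma:mwiseintersections} to the two-element subfamily $\{A, F_i\}$ gives $|A \cap F_i| \geq d - 1$, and splitting the intersection as $A \cap F_i = (A \cap D) \sqcup (A \cap P_i)$ produces the petal lower bound $|A \cap P_i| \geq d - 1 - |A \cap D|$ for each $i \in [k]$. Summing these estimates over the $k$ disjoint petals (all of which are disjoint from $D$, so they contribute independently to $|A|$) yields
\[
k \;=\; |A| \;\geq\; |A \cap D| + k\bigl(d - 1 - |A \cap D|\bigr),
\]
which rearranges to $|A \cap D|(k - 1) \geq k(d - 2)$, i.e.\ $|A \cap D| \geq k(d-2)/(k-1)$. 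Since $k/(k-1) > 1$, for $d \geq 3$ this strictly exceeds $d - 2$, forcing $|A \cap D| \geq d - 1$ and contradicting the assumption.

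The main obstacle is the degenerate case $d = 2$: the rearranged inequality reduces to the tautology $|A \cap D| \geq 0$ and carries no content. To handle this endpoint one must use non-triviality of $\F$ more delicately — if $A \cap D = \emptyset$, the petal bound forces $A$ to meet each of the $k$ petals in exactly one element, and then combining the Delta system with non-trivial witnesses for the two elements of $D$ (in the spirit of the proof of Lemma \ref{lemma:dminus1deltarich}) produces the required contradiction.
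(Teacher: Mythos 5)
For $d \ge 3$ your argument is correct and is essentially the paper's proof: fix the witnessing Delta system with core $D$, apply Lemma \ref{lemma:mwiseintersections} to the pair $\{A,F_i\}$ to get $|A \cap (F_i \setminus D)| \ge d-1-|A\cap D|$, and sum over the $k$ pairwise disjoint petals. (A minor point worth one word: $A$ cannot be one of the $F_i$, since then $D \subseteq A$ and there is nothing to prove, so Lemma \ref{lemma:mwiseintersections} really does apply to two distinct sets.) Your explicit rearrangement $|A\cap D| \ge k(d-2)/(k-1) > d-2$ is a cleaner way of seeing why the sum exceeds $k$ when $|A\cap D|\le d-2$ and $d\ge 3$.

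The genuine gap is the case $d=2$, which you correctly isolate but do not resolve: the last sentence asserts that non-triviality witnesses for the two elements of $D$ ``produce the required contradiction,'' without producing it. In fact no such contradiction exists, because the statement itself fails at $d=2$ as literally written. Take $D=\{x,y\}$, pairwise disjoint $(k-2)$-sets $P_1,\dots,P_k$ disjoint from $D$, put $F_i = D\cup P_i$, and let $A$ be a transversal of the petals (one point from each $P_i$). Then $\F=\{F_1,\dots,F_k,A\}$ is a non-trivial intersecting family, $\{F_1,\dots,F_k\}$ is a $\Delta_{k,k}$ with core $D$, so $D$ has large core degree, yet $A\cap D=\emptyset$. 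So the equality case you identified cannot be ruled out by any argument. To your credit, your careful arithmetic exposes the same blind spot in the paper's own proof, which claims the final sum is strictly greater than $k$ when for $d=2$ it is only at least $k$; the lemma should carry the hypothesis $d\ge 3$ (or stronger hypotheses on the family of sets of large core degree) for its conclusion to hold.
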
	

\begin{proof}
Seeking a contradiction, suppose there exists such a $D \subset [n]$ and $A \in \F$ so that $|A \cap D| < d-1$. By definition of large core degree, there exists a Delta system $\Delta_{k,k} = \{F_1, \ldots, F_k \} \subset \F$ so that $\text{core}(\Delta_{k,k}) = D$. By Lemma \ref{lemma:mwiseintersections}, $| A \cap (F_j \setminus D) | \geq (d-1) - |A \cap D|$ for all $j \in [k]$. This is a contradiction as  \[ |A| \geq |A \cap D| + \sum_{j=1}^{k} |A \cap (F_j \setminus D)| >k. \qedhere \]

\end{proof}

Given a family $\F$, we let $\mathcal{S}_d(\F)$ be the possibly empty collection of $d$-sets with large core degree in $\F$. In the proof of Theorem \ref{thm:mainthm1}, we will iteratively find $d$-sets with large core degree and these will always lie within some ground set of size at most $k+1$. As a result, we think of $\mathcal{S}_d(\F) \subset \binom{[k+1]}{d}$. We now note that for $n \geq k(k-d)+d$, the two extremal families $\A(k,d)$ and $\HM(k,d)$ are so that: 
\begin{eqnarray*}
\mathcal{S}_d(\HM(k,d)) &=&  \{ A \in \binom{[k+1]}{d} : [d-1] \subset A \} \\
\mathcal{S}_d(\A(k,d)) &=& K_{d+1}^{(d)}.
\end{eqnarray*}

\begin{lemma}\label{lemma:intersectionofsunrichsets}
Let $\F \subset \binom{[n]}{k}$ be a non-trivial $d$-wise intersecting family. Then $\mathcal{S}_d(\F) \subset \binom{[k+1]}{d}$ is a $(d-1)$-intersecting family.
\end{lemma}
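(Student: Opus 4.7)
The plan is a short contradiction argument driven by the Delta system that witnesses one of the two $d$-sets having large core degree. Suppose, for contradiction, that there exist $D_1, D_2 \in \mathcal{S}_d(\F)$ with $|D_1 \cap D_2| \leq d - 2$. If $d = k$, the conclusion is vacuous because any two members of $\binom{[k+1]}{k}$ already intersect in $k - 1 = d - 1$ elements, so I may assume $d < k$.

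Using that $D_2$ has large core degree, I would choose a Delta system $\Delta_{k,k} = \{F_1, \ldots, F_k\} \subset \F$ with $\text{core}(\Delta_{k,k}) = D_2$; the features I need are $D_2 \subset F_j$ for every $j$ and the pairwise disjointness of the petals $P_j := F_j \setminus D_2$. I would then apply Lemma \ref{lemma:largeintersectionswithdeltarich} to the large-core-degree set $D_1$ and each $F_j \in \F$, obtaining $|F_j \cap D_1| \geq d - 1$. Decomposing
\[ F_j \cap D_1 \;=\; (D_1 \cap D_2) \;\sqcup\; \bigl( P_j \cap (D_1 \setminus D_2) \bigr), \]
the hypothesis $|D_1 \cap D_2| \leq d - 2$ then forces $|P_j \cap (D_1 \setminus D_2)| \geq 1$ for each $j \in [k]$.

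The contradiction now comes from disjointness: the petals $P_1, \ldots, P_k$ are pairwise disjoint, so together they contain at least $k$ distinct elements of $D_1 \setminus D_2$. However $|D_1 \setminus D_2| = d - |D_1 \cap D_2| \leq d < k$, which is impossible. The only real care needed is the bookkeeping in the decomposition above, separating what lives in $D_2$ from what lives in the petals; once that is in place the argument reduces to a one-line pigeonhole on the disjoint petals, so I do not anticipate any serious obstacle.
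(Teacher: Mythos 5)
Your proof is correct. It reaches the same contradiction as the paper --- a pigeonhole count inside the pairwise disjoint petals of a Delta system --- but it is packaged differently, and arguably more cleanly. The paper invokes \emph{both} witnessing Delta systems $\Delta^1_{k,k}$ (core $D_1$) and $\Delta^2_{k,k}$ (core $D_2$), first selecting an edge $F_i \in \Delta^1_{k,k}$ whose petal is disjoint from $D_2$ (possible since the $k$ petals are disjoint and $|D_2\setminus D_1|\le d<k$), and then applies Lemma~\ref{lemma:mwiseintersections} to each pair $F_i, G_j$ to force $F_i$ to pick up elements from each of the $k$ disjoint petals $G_j\setminus D_2$. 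You instead use only the Delta system for $D_2$ and apply Lemma~\ref{lemma:largeintersectionswithdeltarich} to $D_1$, so that the exact decomposition $F_j\cap D_1=(D_1\cap D_2)\sqcup\bigl(P_j\cap(D_1\setminus D_2)\bigr)$ forces each petal to meet the set $D_1\setminus D_2$ of size at most $d<k$. Since Lemma~\ref{lemma:largeintersectionswithdeltarich} is itself proved by the two-Delta-system computation, the two arguments are morally the same; what yours buys is that the bookkeeping is exact (your two pieces really are disjoint and exhaust $F_j\cap D_1$), whereas the paper's displayed inequality silently drops the possible contributions of $D_1\setminus D_2$ to the petals $G_j\setminus D_2$ and needs the preliminary choice of $F_i$. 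Your separate dispatch of the degenerate case $d=k$ (where the claim is vacuous, both because two $k$-subsets of $[k+1]$ meet in $d-1$ points and because $\mathcal{S}_k(\F)=\emptyset$) is appropriate, since your final pigeonhole genuinely uses $d<k$.
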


\begin{proof}
Suppose there exists $D_1, D_2 \in \mathcal{S}_d(\F)$ so that $|D_1 \cap D_2| \leq d-2$. By definition, there exists $\Delta_{k,k}^1 = \{ F_1, \ldots, F_k  \} \subset \F$ and  $\Delta_{k,k}^2 = \{ G_1, \ldots, G_k \} \subset \F$ so that $\text{core}(\Delta_{k,k}^i) = D_i$ for $i = 1,2$.  Note that there necessarily exists $F_i \in \Delta_{k,k}^{(1)}$ so that $|(F_i \setminus D_1) \cap D_2| = 0$. By Lemma \ref{lemma:mwiseintersections}, $|(F_i \setminus D_1) \cap (G_j \setminus D_2)| \geq (d-1)- |D_1 \cap D_2|$. This is a contradiction since \[ |F_i| \geq |D_1 \cap D_2| + \sum_{j=1}^k |(F_i \setminus D_1) \cap (G_j \setminus D_2)| > k. \qedhere \]
\end{proof}

As a result of Lemma \ref{lemma:largeintersectionswithdeltarich}, we are interested in $\mathcal{S} \subset \binom{[k+1]}{d}$ which are $(d-1)$-intersecting. The following structural type result yields that the collection of $d$-sets with large core degree is necessarily isomorphic to a subfamily of the collection of $d$-sets with large core degree in the extremal families $\A(k,d)$ and $\HM(k,d)$.

\begin{lemma}\label{lemma:manydeltarichsets1}
If $\mathcal{S} \subset \binom{[k+1]}{d}$ is $(d-1)$-intersecting, then $\mathcal{S}$ is isomorphic to a subfamily of $K_{d+1}^{(d)}$ or $\{ D \in \binom{[k+1]}{d} : [d-1] \subset D \}$.
\end{lemma}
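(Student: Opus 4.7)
Since any two distinct $d$-sets in $\mathcal{S}$ have intersection of size at most $d-1$, the $(d-1)$-intersecting hypothesis is equivalent to saying that $|A\cap B|=d-1$ for all distinct $A,B\in\mathcal{S}$. The plan is to pick any two sets $A_1,A_2\in\mathcal{S}$ (assume $|\mathcal{S}|\ge 2$; otherwise the statement is trivial), let $C=A_1\cap A_2$ with $|C|=d-1$, and write $A_1=C\cup\{a\}$, $A_2=C\cup\{b\}$. I would then split into two cases according to whether every member of $\mathcal{S}$ contains $C$.

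\paragraph{Case 1: every $A\in\mathcal{S}$ contains $C$.} Then $\mathcal{S}$ is a star with center $C\in\binom{[k+1]}{d-1}$. After applying a permutation of $[k+1]$ sending $C$ to $[d-1]$, $\mathcal{S}$ becomes a subfamily of $\{D\in\binom{[k+1]}{d}:[d-1]\subset D\}$, which is the second option.

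\paragraph{Case 2: some $A_3\in\mathcal{S}$ does not contain $C$.} The intersection conditions $|A_3\cap A_1|\ge d-1$ and $|A_3\cap A_2|\ge d-1$ together with $C\not\subset A_3$ force $A_3$ to contain both $a$ and $b$, to miss exactly one element $c\in C$, and therefore to equal $(C\setminus\{c\})\cup\{a,b\}$. In particular $A_1\cup A_2\cup A_3=C\cup\{a,b\}$ has exactly $d+1$ elements. The key claim is that every $A_4\in\mathcal{S}$ lies in this $(d+1)$-set $C\cup\{a,b\}$: suppose not and take $x\in A_4\setminus(C\cup\{a,b\})$. Since $|A_4\cap A_1|=d-1$ and $x\notin A_1$, we have $A_4=(A_1\setminus\{y\})\cup\{x\}$ for some $y\in A_1$; a short case split on whether $y=a$ or $y\in C$ then contradicts $|A_4\cap A_2|\ge d-1$ or $|A_4\cap A_3|\ge d-1$ (using that $x\notin A_2\cup A_3$). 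Once the claim is established, $\mathcal{S}\subset\binom{C\cup\{a,b\}}{d}$ is isomorphic to a subfamily of $K_{d+1}^{(d)}$, which is the first option.

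\paragraph{Main obstacle.} The only nontrivial step is the claim in Case 2 that no $A_4\in\mathcal{S}$ can use a point outside $C\cup\{a,b\}$; everything else is bookkeeping. The argument is essentially the observation that using an outside point $x$ forces $A_4$ to already agree with $A_1$ on $d-1$ of its elements, which in turn forces it to disagree too much with either $A_2$ or $A_3$ — the two cases $y=a$ and $y\in C$ handle these two failures respectively. I would be careful about small values, in particular $d=2$, but in that regime the two conclusions coincide (a star centered at a point or a triangle on three points), so no separate treatment is needed.
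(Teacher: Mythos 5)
Your proof is correct and follows essentially the same route as the paper's: fix $A_1,A_2$ with $|A_1\cap A_2|=d-1$ and split according to whether every member of $\mathcal{S}$ contains $C=A_1\cap A_2$, obtaining the star in one case and containment in the $(d+1)$-set $C\cup\{a,b\}$ in the other. One tiny slip: the word ``respectively'' pairs the subcases backwards --- $y=a$ gives $A_4=C\cup\{x\}$, which conflicts with $A_3$ (not with $A_2$, since $|A_4\cap A_2|=|C|=d-1$ there), while $y\in C$ is the subcase that conflicts with $A_2$; the disjunction you state is still correct, so this does not affect the argument.
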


\begin{proof} 
Given distinct $F_1, F_2 \in \mathcal{S}$, we have $|F_1 \cap F_2| =d-1$ and hence without loss of generality, we may assume that $F_1 = [d]$ and $F_2 = [d-1] \cup \{d+1\}$. Now, we let $$\mathcal{S}_1:= \{ F \in \mathcal{S} \setminus \{F_1, F_2\} : |F \cap [d-1]| = d-2   \}$$ and note that if $F \in \mathcal{S}_1$, then $\{d,d+1\} \subset F$ as $\mathcal{S}$ is $(d-1)$-intersecting. We then let $$\mathcal{S}_2:= \{ F \in \mathcal{S} \setminus \{F_1, F_2\} : |F \cap [d-1]| =d-1   \}.$$ For all $F \in \mathcal{S}_1$ and for all $G \in \mathcal{S}_2$, $|F \cap G| = d-2$ and thus if $\mathcal{S}_2 \neq \emptyset$, then $\mathcal{S}_1 = \emptyset$.
\end{proof}

We will now show that if a non-trivial $d$-wise intersecting family $\F \subset \binom{[n]}{k}$ has a particular structure of $d$-sets with large core degree, then $|F| \leq |\HM(k,d)|$.

\begin{lemma}\label{lemma:firststructure} 
Let $\F \subset \binom{[n]}{k}$ be a non-trivial $d$-wise intersecting family. If $\{ A \in \binom{[k]}{d} : [d-1] \subset A\} \subseteq \mathcal{S}_d(\F)$, then $|\F| \leq |\HM(k,d)|$. Moreover, if $|\F| > |\B(k,d)|$, then $\F$ is necessarily isomorphic to some subfamily of $\HM(k,d)$.
\end{lemma}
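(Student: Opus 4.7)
The plan is to exploit the large core degree hypothesis via Lemma~\ref{lemma:largeintersectionswithdeltarich} to force each $A\in\F$ into one of two rigid types, and then to analyze the ``off-diagonal'' piece of $\F$ by a case split on alignment. First, for each $i\in[d,k]$ the hypothesis says $[d-1]\cup\{i\}$ has large core degree, so every $A\in\F$ satisfies $|A\cap([d-1]\cup\{i\})|\geq d-1$ for all $i\in[d,k]$. A case analysis on $|A\cap[d-1]|$ rules out $|A\cap[d-1]|\leq d-3$ and forces either $[d-1]\subseteq A$, or $|A\cap[d-1]|=d-2$ together with $[d,k]\subseteq A$; in the latter situation $A=([k]\setminus\{j\})\cup\{x\}$ for some $j\in[d-1]$ and $x\in[k+1,n]$. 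Write $\F=\F_1\sqcup\F_2$ for this decomposition, and for each $j\in[d-1]$ set $X_j=\{x\in[k+1,n]:([k]\setminus\{j\})\cup\{x\}\in\F_2\}$. Non-triviality of $\F$ forces every $X_j$ to be non-empty, since any set in $\F$ omitting $i\in[d-1]$ must lie in $\F_2$.

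The crucial maneuver is to pick a witness $B_j=([k]\setminus\{j\})\cup\{x_j\}\in\F_2$ for each $j\in[d-1]$ and apply the $d$-wise intersecting condition to $B_1,\ldots,B_{d-1}$ together with an arbitrary $A\in\F_1$. A direct computation gives $B_1\cap\cdots\cap B_{d-1}=[d,k]\cup\bigcap_j\{x_j\}$, so $A$ must meet this set. The dichotomy is whether $\bigcap_j\{x_j\}$ can be forced empty. In the aligned case, each $|X_j|=1$ with all unique elements coinciding at a common $x^*$; then $\F_2=\{([k]\setminus\{j\})\cup\{x^*\}:j\in[d-1]\}$, and relabeling $x^*$ as $k+1$ identifies $\F_2$ with $\{[k+1]\setminus\{j\}:j\in[d-1]\}$. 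Now $B_1\cap\cdots\cap B_{d-1}=[d,k+1]$, so every $A\in\F_1$ meets $[d,k+1]$, whence $\F\subseteq\HM(k,d)$ and $|\F|\leq|\HM(k,d)|$.

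In the non-aligned case, either some $|X_j|\geq 2$ or all $|X_j|=1$ with the singletons not all equal; a short pigeonhole (if $a\neq b\in X_j$, then given any choice of $x_{j'}\in X_{j'}$ for $j'\neq j$, whichever of $a,b$ fails to equal their common value works) lets us pick the $x_j$'s not all equal, forcing $\bigcap_j\{x_j\}=\emptyset$ and thus every $A\in\F_1$ to meet $[d,k]$. This yields $|\F_1|\leq\binom{n-d+1}{k-d+1}-\binom{n-k}{k-d+1}$ and $|\F_2|\leq(d-1)(n-k)$, summing to $|\F|\leq|\B(k,d)|$. Combining both cases gives $|\F|\leq|\HM(k,d)|$ in the regime where $|\B(k,d)|\leq|\HM(k,d)|$, and the ``moreover'' clause follows immediately: if $|\F|>|\B(k,d)|$, the non-aligned case is impossible, so $\F\subseteq\HM(k,d)$ up to isomorphism.

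The hard part will be the bookkeeping in the non-aligned case, including the mild pigeonhole above and carefully bounding the two pieces using the structural constraints inherited from the $d$-wise intersection. A subsidiary subtlety is the $d=2$ boundary, where $[d-1]$ is a singleton and the $d$-wise intersecting condition degenerates to pairwise intersection; here the analysis reduces to the classical Hilton-Milner dichotomy, already encoded in the structure above.
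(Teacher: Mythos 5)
Your argument follows essentially the same route as the paper: decompose $\F$ into the sets containing $[d-1]$ and the sets of the form $([k]\setminus\{j\})\cup\{x\}$, use non-triviality to produce witnesses $B_j$, and split on whether the extra elements of the witnesses can be made to disagree. You are in fact more explicit than the paper in two useful places: you pin down the exact shape of the second type of set, and you spell out the pigeonhole that lets one choose non-coincident $x_j$'s. For $d\ge 3$ the argument is sound (modulo the inequality $|\B(k,d)|\le|\HM(k,d)|$, which you invoke without checking; it holds for large $n$ when $2d\le k$, which is the only regime in which the lemma is applied, but it actually fails for $d=k-1\ge 3$, so the first conclusion of the lemma needs that caveat -- a defect shared with the paper's own proof).

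The genuine gap is the $d=2$ case, which you flag but do not close, and which your mechanism cannot close as written. When $d=2$ there is a single index $j=1$, so ``the $x_j$'s are not all equal'' is vacuous and $\bigcap_j\{x_j\}=\{x_1\}$ is never empty; non-alignment then means $|X_1|\ge 2$, and intersecting a set $A\ni 1$ with $B=[2,k]\cup\{a\}$ and $B'=[2,k]\cup\{b\}$ does \emph{not} force $A\cap[2,k]\ne\emptyset$, since $A$ may instead contain both $a$ and $b$. Concretely, for $k\ge 5$ and large $n$ the family
\[
\{A: 1\in A,\ A\cap[2,k]\neq\emptyset\}\ \cup\ \{A:\{1,a,b\}\subseteq A,\ A\cap[2,k]=\emptyset\}\ \cup\ \{[2,k]\cup\{a\},\,[2,k]\cup\{b\}\}
\]
is non-trivial, intersecting, satisfies the core-degree hypothesis, has more than $|\B(k,2)|$ sets, and is not isomorphic to a subfamily of $\HM(k,2)$ (it has two sets avoiding the near-kernel point $1$). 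So the ``reduces to the classical Hilton--Milner dichotomy'' remark does not rescue the moreover clause at $d=2$; some additional argument (or an added hypothesis ruling out such $A$) is required there. I note that the paper's own proof has the same blind spot: its Case 1 (``$\bigcap_{i}X_i=[d,k]$'') is vacuously impossible when $d=2$ because a single $k$-set cannot equal $[2,k]$, and its Case 2 containment $\F_1\subseteq\{[k+1]\setminus\{i\}\}$ is asserted without justification in that situation. So your write-up matches the paper's level of rigor except that you were more candid about where the boundary case is fragile; the fix, whatever it is, is not in either proof.
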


\begin{proof}
We have that $D_x := [d-1] \cup \{x\} \in \mathcal{S}_d(\F)$ for all $x \in [d,k]$. As a result of Lemma \ref{lemma:largeintersectionswithdeltarich}, for all $A \in \F$, $|A \cap [d-1]| \geq d-2$. We let $\F_1:= \{ A \in \F : |A \cap [d-1]| =d-2  \}$ and $\F_2:= \{ A \in \F : |A \cap [d-1]| =d-1  \}$. Next, using the nontriviality of $\F$, for each $i \in [d-1]$ there exists $X_i \in \F$ so that $i \notin X_i$. We now have two cases based on the collection of sets $\F(\overline{i}) = \{ A \in \F : 1 \notin A \}$ for $i \in [d-1]$.

\medskip
\noindent First, we consider the case where we may find $X_i \in \F(\overline{i})$ so that $$ \bigcap\limits_{i=1}^{d-1} X_i = [d,k].$$ In this case, 
\begin{eqnarray*}
\F_1 &\subseteq&  \{ A \in \binom{[n]}{k} : |A \cap [d-1]| = d-2, [d,k] \subset A \} \\
\F_2 &\subseteq& \{ A \in \binom{[n]}{k} : [d-1] \subset A, A \cap [d,k]  \neq \emptyset  \}.
\end{eqnarray*}

Thus $|\F| = |\F_1| + |\F_2| \leq |\B(k,d)| \leq |\HM(k,d)|$.

\medskip
\noindent Next, if we cannot find $X_i$'s so that they fall in the above case, then without loss of generality$$ \bigcap\limits_{i=1}^{d-1} X_i = [d,k+1].$$  In this case, 
\begin{eqnarray*}
\F_1 &\subseteq&  \{ [k+1] \setminus \{i\} : i \in [d-1]  \} \\
\F_2 &\subseteq& \{ A \in \binom{[n]}{k} : [d-1] \subset A, A \cap [d,k+1]  \neq \emptyset  \}.
\end{eqnarray*}

Thus $|\F| = |\F_1| + |\F_2| \leq |\HM(k,d)|$.
\end{proof}

We now will prove the analog of Lemma \ref{lemma:firststructure} when $\mathcal{S}_d(\F)$ is isomorphic to a subfamily of $K_{d+1}^{(d)}$. 

\begin{lemma}\label{lemma:secondstructure}
Let $\F \subset \binom{[n]}{k}$ be a non-trivial $d$-wise intersecting family. Given that $|\mathcal{S}_d(\F)| \geq 3$ and $\mathcal{S}_d(\F) \subseteq K_{d+1}^{(d)}$, then $\F \subseteq \A(k,d)$.
\end{lemma}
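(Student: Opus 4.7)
The plan is to leverage the conclusion of Lemma~\ref{lemma:largeintersectionswithdeltarich}: since every set $D \in \mathcal{S}_d(\F)$ satisfies $|A \cap D| \geq d-1$ for all $A \in \F$, having three different such $d$-sets sitting inside a $(d+1)$-element ground set will force every $A \in \F$ to contain at least $d$ elements of that ground set, which is exactly the defining condition of $\A(k,d)$.

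More concretely, after relabeling I would assume $\mathcal{S}_d(\F) \subseteq \binom{[d+1]}{d}$, so that any element of $\mathcal{S}_d(\F)$ has the form $[d+1] \setminus \{x\}$ for some $x \in [d+1]$. Pick three distinct members $D_1, D_2, D_3 \in \mathcal{S}_d(\F)$, say $D_i = [d+1] \setminus \{x_i\}$ with $x_1, x_2, x_3$ pairwise distinct elements of $[d+1]$. Fix an arbitrary $A \in \F$ and set $T := A \cap [d+1]$. Then
\[
A \cap D_i = T \setminus \{x_i\}, \qquad \text{so } |A \cap D_i| \in \{|T|, |T|-1\},
\]
and Lemma~\ref{lemma:largeintersectionswithdeltarich} yields $|A \cap D_i| \geq d-1$ for each $i$, hence $|T| \geq d-1$.

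The remaining step is to rule out $|T| = d-1$. In that case $[d+1] \setminus T$ has only two elements, yet $|A \cap D_i| \geq d - 1 = |T|$ forces $x_i \notin T$, i.e. $x_i \in [d+1] \setminus T$, for every $i \in \{1,2,3\}$. This places three distinct elements into a 2-element set, a contradiction. Therefore $|A \cap [d+1]| = |T| \geq d$ for every $A \in \F$, which is exactly the condition $A \in \A(k,d)$, completing the proof. There is no real obstacle here; the hypothesis $|\mathcal{S}_d(\F)| \geq 3$ is used precisely once, in the pigeonhole at the end, and the bound $|A \cap D_i| \geq d-1$ handed to us by Lemma~\ref{lemma:largeintersectionswithdeltarich} does all the work.
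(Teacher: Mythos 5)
Your proof is correct and follows essentially the same route as the paper: fix three distinct $D_i = [d+1]\setminus\{x_i\}$, apply Lemma~\ref{lemma:largeintersectionswithdeltarich} to get $|A \cap D_i| \geq d-1$, and deduce $|A \cap [d+1]| \geq d$. The only difference is that you spell out the final pigeonhole step, which the paper leaves implicit.
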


\begin{proof}
Let $D_1, D_2, D_3 \in \mathcal{S}_d(\F)$ be distinct $d$-sets with large core degree. By Lemma \ref{lemma:largeintersectionswithdeltarich} and Lemma \ref{lemma:manydeltarichsets1}, we may assume $D_i= [d+1] \setminus \{i\}$ and that $|A \cap D_i| \geq d-1$ for all $A \in \F$ for $i=1,2,3$. This then implies that $|A \cap [d+1]| \geq d$ for all $A \in \F$ and thus the result follows. 
\end{proof}

\section{ The Delta system method}\label{sec:deltasystemmethod}

The Delta system method is a powerful tool in extremal combinatorics that initially appeared in Deza, Erd{\H o}s and Frankl's \cite{DEF} study of $(n,k,L)$-systems. It has also been used by Frankl and F{\" u}redi \cite{FF} in Chv{\' a}tal's problem of avoiding $d$-simplicies as well as by F{\" u}redi \cite{F1, FJR} on the problem of embedding expansions of forests in $r$-graphs for $r\geq4$.

\subsection{ F{\" u}redi Intersection Semilattice Theorem}

Given a $k$-partite hypergraph $\h \subset \binom{[n]}{k}$ with parts $X_1, \ldots, X_k$, we let the \textit{projection} of $e$ be $$ \text{proj}(e) = \{ i : e \cap X_i \neq \emptyset  \}.$$ Then, let the \textit{intersection pattern} of $e$ on $\h$ be $$ I_{\h}(e) = \{ \text{proj}(g) : g \in \h|_e     \}. $$

We are now able to state F{\" u}redi's Intersection Semilattice lemma. 

\begin{lemma}\label{lemma:semilattice}
For fixed $s,k \in \N$, there exists $c:=c(s,k)$ so that for all $\h \subset \binom{[n]}{k}$ there exists $k$-partite $\h^\star \subset \h$ and $J \subset 2^{[k]}$ so that $|\h^\star| > c|\h|$ and $J$ is intersection closed. Moreover, for all $e \in \h^\star$ that $I_{\h^\star}(e) = J$ and that for all $f \in \h^\star|_e$, $d_{\h^\star}^\star(f) \geq s$.
\end{lemma}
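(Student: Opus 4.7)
The plan is to combine three standard ingredients: a random $k$-coloring to make $\mathcal{H}$ $k$-partite, pigeonhole on the bounded universe of intersection patterns to fix a single $J \subseteq 2^{[k]}$, and the Erd\H{o}s--Ko--Rado sunflower lemma to guarantee large core degrees. First, I would color the vertices of $[n]$ independently and uniformly with $k$ colors; the probability that a fixed edge is rainbow is exactly $k!/k^k$, so some coloring leaves a $k$-partite subhypergraph $\mathcal{H}_0 \subseteq \mathcal{H}$ with $|\mathcal{H}_0| \geq (k!/k^k)|\mathcal{H}|$ and induced parts $X_1,\ldots,X_k$. Next, for each $e \in \mathcal{H}_0$ the pattern $I_{\mathcal{H}_0}(e)$ is a subset of $2^{[k]}$, so one of at most $2^{2^k}$ values; by pigeonhole some fixed $J$ is realized for at least a $2^{-2^k}$ fraction of edges, and I would retain these as $\mathcal{H}_1$.

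Second, I would enforce the core-degree condition $d^*_{\mathcal{H}^*}(f) \geq s$ using the Erd\H{o}s--Ko--Rado sunflower lemma. For each $L \in J$ and each $f \subseteq e \in \mathcal{H}_1$ with $\operatorname{proj}(f) = L$, if $d^*_{\mathcal{H}_1}(f) < s$ then the sunflower lemma bounds the number of edges of $\mathcal{H}_1$ containing $f$ by a constant depending only on $s$ and $k$; I would delete all such edges. Because the cap is independent of $|\mathcal{H}_1|$, only a bounded number of edges are removed per pair $(e,L)$, so a positive fraction of the family survives.

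The main obstacle is the circularity between fixing $J$ and pruning for core degrees: restriction to the majority pattern may shrink the traces $I_{\mathcal{H}}(e)$, and pruning may in turn alter $J$. I would resolve this by iterating the pattern-fixing and pruning steps in an outer loop, always restricting to the currently most popular pattern and then cleaning up low-core-degree witnesses. Because $J$ can only shrink across iterations, the process terminates after at most $2^{2^k}$ rounds. The intersection-closure of the resulting $J$ is then automatic: closing under intersection only reduces $J$, and the existence of an $s$-sunflower (with $s \geq 2$) for each realized trace guarantees that any pairwise intersection $L_1 \cap L_2$ of surviving patterns is itself realized by a suitable petal intersection, forcing $L_1 \cap L_2 \in J$. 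Tracking the constants throughout yields a $c(s,k)$ of the form $(k!/k^k)\cdot 2^{-O(2^k)} \cdot [\text{poly}(s,k)]^{-O(2^k)}$, positive and depending only on $s$ and $k$ as required.
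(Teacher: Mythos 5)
First, a point of comparison: the paper does not prove Lemma \ref{lemma:semilattice} at all --- it is F\"{u}redi's intersection semilattice theorem, quoted as a black box (see \cite{MV} and F\"{u}redi's original 1983 paper) --- so there is no in-paper proof to measure your attempt against. Your sketch does follow the standard architecture of F\"{u}redi's argument (random $k$-partition at cost $k!/k^k$, pigeonhole over the at most $2^{2^k}$ possible patterns, sunflower lemma, iterated cleaning), but the step that carries all of the difficulty is wrong as stated. You claim that if $d^{\star}_{\h_1}(f) < s$ then the sunflower lemma caps the number of edges of $\h_1$ containing $f$ by a constant. This is false: the Erd\H{o}s--Rado lemma applied to the link of $f$ produces a Delta system whose core \emph{contains} $f$ but may be strictly larger, so it witnesses $d^{\star}(f') \geq s$ for some $f' \supsetneq f$ and says nothing about $d^{\star}(f)$ itself. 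Concretely, if every edge of $\h_1$ contains a fixed pair $\{x,y\}$, then $f=\{x\}$ lies in every edge while $d^{\star}_{\h_1}(f)=1$. The correct statement requires a recursive threshold argument (greedily building petals with core \emph{exactly} $f$, one gets: either $d^{\star}(f)\geq s$ or $N(f)\leq sk\max_{v}N(f\cup\{v\})+s$, where $N$ counts containing edges), and what it yields is only that some \emph{superset} of $f$ is rich --- which is precisely why F\"{u}redi's conclusion is phrased in terms of realized traces $f\in\h^{\star}|_e$ rather than all subsets of $e$.

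Second, even granting a cap of $C(s,k)$ edges per bad set, your accounting does not close: there are on the order of $2^k|\h_1|$ pairs $(e,L)$, so ``a bounded number of deletions per pair'' still permits deleting $C(s,k)\,2^k|\h_1|\gg|\h_1|$ edges. Showing that the iterated deletion retains a constant proportion of $\h$ is the actual content of F\"{u}redi's proof and requires a global charging argument that the proposal does not supply; the observation that the outer loop runs at most $2^{2^k}$ times does not substitute for a per-round retention bound. Finally, the closing claim that intersection-closure of $J$ follows once $s\geq 2$ is too weak: to realize $L_1\cap L_2$ as a trace one needs a petal of the sunflower with core $f_1$ that avoids $(e\cup g_2)\setminus f_1$, which requires $s>2k$ (harmless, since one may prove the lemma for a larger $s$ first, but it must be said). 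As it stands, the proposal is an outline of the right strategy in which the two essential quantitative steps are either missing or based on an incorrect lemma.
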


Given $\h \subset \binom{[n]}{k}$ so that it satisfies the conclusions of Lemma \ref{lemma:semilattice}, we say that $\h$ is $(s,J)$-homogeneous and if there exists an $s \in \N$ so that $\h$ is $(s,J)$-homogeneous, we say that $\h$ is $J$-homogeneous.

Let $J \subset 2^{[k]}$, then the \textit{rank} of $J$, denoted $\rho(J)$ is defined as $$\rho(J) := \min \{ |e| : e \subset [k]: d_J(e) = 0 \}.$$

\subsection{ Application to Non-trivial $d$-wise Intersecting Families}

\noindent Let $\F \subset \binom{[n]}{k}$ be non-trivial and $d$-wise intersecting. Applying Lemma \ref{lemma:semilattice} with $s=k$ to large subfamilies $ \h \subset \F$ when $n$ sufficiently large gives a particular intersection structure $J$. To this end, let $n_0(k,d):= d+e(k^22^k)^{2^k}(k-d)$ and we let $c_k:=c(k,k) > (k^22^k)^{-2^k}$.

\begin{lemma}\label{lemma:keylemma}
Let $\h \subset \F$ be $J$-homogeneous and so that $|\h| \geq \binom{n-d}{k-d}$ where $n> n_0(k,d)$. Then $$J \subset \bigcup\limits_{l=d}^{k-1} \binom{[k]}{l}.$$ Moreover, $|J \cap \binom{[k]}{d}| = 1$.
\end{lemma}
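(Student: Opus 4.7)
My overall approach is to translate each $L \in J$ into a structural condition on $\h$: since $\h$ is $k$-partite, every trace $f \in \h|_e$ satisfies $|f| = |\mathrm{proj}(f)|$, and the $(k,J)$-homogeneity makes each such $f$ the core of a $\Delta_{k,k}$ inside $\h \subseteq \F$, i.e.\ $d_\h^\star(f) \geq k$. Thus the conclusions on $J$ amount to restrictions on what sizes such cores can take, and follow by combining the structural lemmas of Section~\ref{sec:structuredeltarich} with one size comparison.

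First I would prove the two-sided bound $d \leq |L| \leq k-1$ for $L \in J$. The upper bound is immediate since the only subset of $e$ whose projection is $[k]$ is $e$ itself, which is excluded from $\h|_e$. For the lower bound, the case $|L| \leq d-2$ is ruled out by Lemma~\ref{lemma:mwiseintersections} with $m=2$: any two edges in $\F$ intersect in at least $d-1$ elements, so the core of any Delta system of size $\geq 2$ in $\h$ has size $\geq d-1$. The remaining case $|L|=d-1$ is exactly forbidden by Lemma~\ref{lemma:dminus1deltarich}. Then $|J \cap \binom{[k]}{d}| \leq 1$ is immediate from the intersection-closedness of $J$ guaranteed by Lemma~\ref{lemma:semilattice}: any two distinct $d$-sets in $J$ would force an element of $J$ of size at most $d-1$, contradicting the previous paragraph.

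The main obstacle, and the only step that uses the hypothesis $|\h| \geq \binom{n-d}{k-d}$, is showing $|J \cap \binom{[k]}{d}| \geq 1$. I plan to argue by contradiction. Suppose every $L \in J$ satisfies $|L| \geq d+1$. For any distinct $e_1, e_2 \in \h$, non-triviality of their intersection (which holds since $d \geq 2$ and $\F$ is $d$-wise intersecting) produces a nonempty trace $e_1 \cap e_2 \in \h|_{e_1}$ whose projection is in $J$, forcing $|e_1 \cap e_2| \geq d+1$. So $\h$ is a $(d+1)$-intersecting family of $k$-sets. Fixing a reference edge $e_0 \in \h$, each other edge contains one of the $\binom{k}{d+1}$ distinct $(d+1)$-subsets of $e_0$, which gives
\[
|\h| \;\leq\; 1 + \binom{k}{d+1}\binom{n-d-1}{k-d-1}.
\]
Combined with the hypothesis $|\h| \geq \binom{n-d}{k-d}$ this forces $(n-d)/(k-d)$ to be of order at most $2^k$, which is violated for $n > n_0(k,d) = d + e(k^2 2^k)^{2^k}(k-d)$ by the explicit form of $n_0$. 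This size comparison is the main technical step; the preceding parts are essentially bookkeeping on top of the structural lemmas already established.
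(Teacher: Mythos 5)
Your proposal is correct and, for the containment $J \subset \bigcup_{l=d}^{k-1}\binom{[k]}{l}$ and the bound $|J \cap \binom{[k]}{d}| \leq 1$, it runs exactly parallel to the paper: Lemma~\ref{lemma:mwiseintersections} kills sizes below $d-1$, Lemma~\ref{lemma:dminus1deltarich} kills size $d-1$ (using that homogeneity with $s=k$ makes every trace the core of a $\Delta_{k,k}$), and intersection-closedness gives uniqueness. Where you genuinely diverge is the existence step. The paper supposes the minimal element of $J$ has size $d+1$, deduces $d_J([d+2,k])=0$ and hence $\rho(J)\leq k-d-1$, and then invokes the rank bound $|\h^\star|\leq \binom{n}{\rho(J)}$ cited from \cite{MV}. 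You instead note that if every member of $J$ has size at least $d+1$, then homogeneity forces every pairwise intersection in the homogeneous family to have size at least $d+1$, and you bound a $(d+1)$-intersecting family of $k$-sets by $1+\binom{k}{d+1}\binom{n-d-1}{k-d-1}$. Your route is more elementary and avoids the external rank lemma, at the price of an extra factor $\binom{k}{d+1}$ relative to the paper's bound $\binom{n}{k-d-1}$. Under the lemma's literal hypothesis --- $\h$ itself homogeneous with $|\h|\geq\binom{n-d}{k-d}$ --- this is harmless: you need $(n-d)/(k-d) > \binom{k}{d+1}+1$, and $n_0(k,d)$ gives $(n-d)/(k-d) > e(k^2 2^k)^{2^k} \gg 2^k$. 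Be aware, though, that the paper's own proof carries out the comparison on a subfamily $\h^\star$ with only $|\h^\star| > c_k|\h|$, $c_k > (k^2 2^k)^{-2^k}$; routed through $\h^\star$, your inequality requires $c_k(n-d)/(k-d) > \binom{k}{d+1}$, while the stated $n_0$ only guarantees $c_k(n-d)/(k-d) > e$, which already fails against $\binom{4}{3}=4$ when $k=4$, $d=2$. Under that reading your argument still works, but only after inflating $n_0$ by a factor of roughly $2^k$ --- still doubly exponential, just not the constant the paper states.
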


\begin{proof} 
Lemma \ref{lemma:mwiseintersections} gives that $|A \cap B| \geq d-1$ for all $A, B \in \F$ which yields that  $$J \subset \bigcup\limits_{l=d-1}^{k-1} \binom{[k]}{l}.$$
Lemma \ref{lemma:dminus1deltarich} yields that $J \cap \binom{[k]}{d-1} = \emptyset$ and hence  $$J \subset \bigcup\limits_{l=d}^{k-1} \binom{[k]}{l}.$$

Now, since $J$ is intersection closed, $|J \cap \binom{[k]}{d}| \leq 1$. If $J \cap \binom{[k]}{d} = \emptyset$, then without loss of generality, suppose that $[d+1]$ is the inclusion minimal element of $J$. Suppose there exists $X \in J$ so that $[d+2,k] \subset X$, then there is an $i \in [d+1]$ so that $i \notin X$ as $[k] \notin J$. Now, $X \cap [d+1] \in J$, but $|X \cap [d+1]| <(d+1)$ so we necessarily have that $d_J([d+2,k])=0$. As a result, $\rho(J) \leq k-d-1$. Next, if $\h^\star \subset \binom{[n]}{k}$ is $J$-homogeneous with $\rho(J)= r$, then $|\h^\star| \leq \binom{n}{r}$ (see \cite{MV}). Thus,  $|\h^\star| \leq \binom{n}{k-d-1}$. However, for $n> n_0(k,d)$, \[ |\h^\star| > c_k |\h| \geq c_k \binom{n-d}{k-d} >\binom{n}{k-d-1}. \qedhere \]
\end{proof}

\section{Proof of Theorem \ref{thm:mainthm1}}\label{sec:proof}

In this section, we will prove Theorem \ref{thm:mainthm1} by repeated application of Lemma \ref{lemma:keylemma} and the structural results from Section \ref{sec:structuredeltarich}. 

\begin{proof}[Proof of Theorem \ref{thm:mainthm1}]
As a result of Lemma \ref{lemma:firststructure} and Lemma \ref{lemma:secondstructure}, it suffices to show that $|\mathcal{S}_d(\F)|\geq 3$ and that we either have $\mathcal{S}_d(\F)$ contains $\{ A \in \binom{[k]}{d} : [d-1] \subset A \}$ with $|\mathcal{S}_d(\F)| = k-d+1$ or $\mathcal{S}_d(\F)$ is isomorphic to a subfamily of $K_{d+1}^{(d)}$. An application of Lemma \ref{lemma:keylemma} yields a $d$-set $D_1$ which has large core degree. We now consider $$\h_1: = \{ A \in \F : D_1 \nsubseteq A \}$$ and again applying Lemma \ref{lemma:keylemma} yields a $d$-set $D_2$ which has large core degree and $D_1 \neq D_2$. We can iteratively apply Lemma \ref{lemma:keylemma} $s$ times to get $\{D_1, \ldots, D_s\} \in \mathcal{S}_d(\F)$ where the particular value of $s$ depends on $|\F|$. 

\medskip 
In the case where $2d<k$, we may suppose that $|\F|> |\B(k,d)| > |\A(k,d)|$ and we take $s=k-d+1$. Noting that $s>d+1$ then yields that $\mathcal{S}_d(\F)$ is not isomorphic to a subfamily of $K_{d+1}^{(d)}$. Lemma \ref{lemma:firststructure} then yields that $\F$ is isomorphic to a subfamily of $\HM(k,d)$.

\medskip 
In the case where $2d=k$, we may suppose that $|\F|> |\A(k,d)| > |\B(k,d)|$ and also take $s=k-d+1$ where we note $k-d+1=d+1$. Noting that $|\F| > |\A(k,d)|$ then yields that $\mathcal{S}_d(\F)$ is not isomorphic to  $K_{d+1}^{(d)}$. Lemma \ref{lemma:firststructure} then yields that $\F$ is isomorphic to a subfamily of $\HM(k,d)$.

\medskip 
In the case where $2d \geq  k+1$, we may suppose that $|\F|> |\HM(k,d)| > |\B(k,d)|$. When $d<k-1$, we take $s=k-d+1 \geq 3$ and when $d=k-1$, and a Inclusion-Exclusion argument yields that we may take $s=3$. In both of these cases, noting that $|\F| > |\HM(k,d)|$ then yields that $\mathcal{S}_d(\F)$ is not isomorphic to a subfamily of $\{ A \subset [k]: [d-1] \subset A\}$. Lemma \ref{lemma:secondstructure} yields that $\F$ is isomorphic to a subfamily of $\A(k,d)$.
\end{proof}

\section{Concluding Remarks}

The general framework of considering $d$-sets with large core degree from Section \ref{sec:structuredeltarich} requires that $n \geq k(k-d)+d$. One can probably alter the threshold of $d$-sets with large core degree to $k-d+3$ as opposed to $k$ via similar arguments in this paper to slightly improve our value of $n_0(k,d)$, but it would still be doubly exponential. 

\medskip 
For the particular cases of $d=k-1$ and $d=k-2$, we can achieve more reasonable values on $n_0(k,d)$ via direct arguments. In the case where $d=k-2$, Lemma \ref{lemma:mwiseintersections} yields that $|A \cap B| \geq k-3$ for all $A, B \in \F$. By considering the trace of $\F$ on $A$ for a fixed $A \in \F$,  $$ |\F| = \sum_{ X \in \F|_A} |H_X|$$ where we let $H_X = \{ Y \subset [n] : X \sqcup Y \in \F \}$ be the link hypergraph of $X$ in $\F$. If $|X_1 \cap X_2| = m$, then the link hypergraphs $H_{X_1}$ and $H_{X_2}$ are necessarily $(d-1)-m$ cross intersecting. Moreover, Frankl's improved bounds \cite{F3} on the Erd{\H o}s Matching conjecture give that if $|H_X|$ is sufficiently large, then $\nu(H_X) = d_{\F}^\star(X) = s$. Using these facts, we consider various cases and in each case prove that if $n\geq 7k$, then $$ |\F| = \sum_{ X \in \F|_A} |H_X| \leq |\A(k,d)|.$$

\medskip 
Given a family $\F \subset 2^{[n]}$, let $\delta(\F) := \min_{i \in [n]} |\{ A \in \F : i \in A\}|$ be the minimum degree of an element in $\F$. Recently, Frankl, Han, Huang, and Zhao \cite{FHHZ} proved a degree version of  Theorem \ref{thm:HM}. It would be interesting to see if a degree version of our result would hold:  

\medskip 
Does there exist $n_1(k,d)$ so that for $n>n_1(k,d)$, if $\F \subset \binom{[n]}{k}$ is a non-trivial $d$-wise intersecting family, then $\delta(\F) \leq \max \{ \delta(\HM(k,d)), \delta(\A(k,d)) \}$? 

\medskip
Moreover, it would be interesting to see how an optimal $n_0(k,d)$ compares with optimal an $n_1(k,d)$. In the case where $d=2$, we have that $n_0(k,2) = 2k+1$ and $n_1(k,2)= O(k^2)$ although Frankl et al. \cite{FHHZ} asked if their degree Hilton-Milner holds for $n \geq 2k+1$.

\medskip
\medskip


\begin{thebibliography}{99}
	
\bibitem{BD} A.Brace, B.A. Daykin, A finite set covering theorem, Bulletin of the Australian Mathematical Society, 5(2), 1971, 197-202. 

\bibitem{DEF} M. Deza, P. Erd{\H o}s, P. Frankl, Intersection properties of systems of finite sets, Proc. London Math. Soc., 36(3), 1978, 369-384.

\bibitem{EKR} P.Erd{\H o}s. C. Ko , R. Rado, Intersection theorems for systems of finite sets, The Quarterly Journal of Mathematics, 48(12), 1961.
	
\bibitem{F} P. Frankl, On intersecting families of finite sets, J. Combin. Theory Ser. A, 24(2), 1978, 146-161.

\bibitem{F2} P. Frankl, On a problem of Chv{\' a}tal and Erd{\H o}s on hypergraphs containing no generalized simplex, J. Combin. Theory Ser. A, 30(2), 1981, 169-182.

\bibitem{F3} P. Frankl, Improved bounds on the Erd{\H o}s matching conjecture, J. Combin. Theory Ser. A 120(5), 2013, 1068-1072.

\bibitem{FF} P.Frankl, Z.F{\" u}redi, Exact Solution of some Tur{\' a}n type problems, J. Combin. Theory Ser. A 45(2), 1987, 226-262. 

\bibitem{FT} P. Frankl, N. Tokushige, Extremal problems for finite sets, American Mathematical Soc., 2018.

\bibitem{FHHZ} P. Frankl, H. Huang,  J. Han Y. Zhao, A degree version of the Hilton-Milner Theorem, J. Combin. Theory Ser. A, 155, 2018, 493-502.

\bibitem{F1} Z. F{\" u}redi, Linear trees in uniform hypergraphs. European J. Combin. Theory, 35, 2014, 264-272.

\bibitem{FJR} Z. F{\" u}redi, T. Jiang, R. Seiver, Exact solution of the hypergraph Tur{\' a}n problem for k-uniform linear paths, Combinatorica, 34(3), 2011.


\bibitem{FS} Z. F{\" u}redi, B. Sudakov, Extremal set systems with restricted k-wise intersections, J. Combin. Theory Ser. A, 105(1), 2004, 143-159.

\bibitem{GS} V. Grolmusz , B.Sudakov, On k-wise set-intersections and k-wise Hamming-distances, J. Combin. Theory Ser. A, 99(1), 2002, 180-190. 

\bibitem{HY} J. Han, Y. Kohayakawa, The maximum size of a non-trivial intersecting uniform family that is not a subfamily of the Hilton--Milner family, Proc. Amer. Math. Soc., 145, 2017, 73-87.

\bibitem{HM} A.W. J. Hilton, E.C. Milner, Some intersection theorems for systems of finite sets, . Quart. J. Math. Oxford Ser., 18 (2), 1967, 369-384. 

\bibitem{K} A. Kupavskii, Structure and properties of large intersecting families, \href{https://arxiv.org/abs/1810.00920}{https://arxiv.org/abs/1810.00920}, 2019.

\bibitem{MV} D. Mubayi, J. Verstraete, A survey of Turán problems for expansions. In: Beveridge A., Griggs J., Hogben L., Musiker G., Tetali P. (eds) Recent Trends in Combinatorics. The IMA Volumes in Mathematics and its Applications, 159, 2016.

	
\end{thebibliography}
\end{document}